\newtheorem{theorem}{Theorem}[section]
\newtheorem{lemma}[theorem]{Lemma}
\newtheorem{corollary}[theorem]{Corollary}
\newtheorem{remark}[theorem]{Remark}
\newtheorem{example}[theorem]{Example} 
\def\diag{\mathop{\mathrm{diag}}} 
\begin{document} 


\title[Cayley transform and the Kronecker product]
      {Cayley transform and the Kronecker product of Hermitian matrices}
      
\author{Yorick Hardy}
\address{Yorick Hardy, 
Department of Mathematical Sciences, University of South Africa, 
Pretoria, South Africa}
\email{hardyy@unisa.ac.za}

\author{Ajda Fo\v sner}
\address{Ajda Fo\v sner, 
Faculty of Management, University of Primorska,
Cankarjeva 5, SI-6104 Koper, Slovenia}
\email{ajda.fosner@fm-kp.si}

\author{Willi-Hans Steeb}
\address{Willi-Hans Steeb, 
International School for Scientific Computing, University of Johannesburg, 
Auckland Park 2006, South Africa}
\email{steebwilli@gmail.com}

\date{}

\begin{abstract}
We consider the conditions under which the Cayley transform of the Kronecker product of two Hermitian matrices 
can be again presented as a Kronecker product of two matrices and, if so, if it is a product
of the Cayley transforms of the two Hermitian matrices. We also study the related question: given
two matrices, which matrix under the Cayley transform yields the Kronecker product of their Cayley transforms.
\end{abstract}

\maketitle

\noindent
{\em 2010 Math. Subj. Class.}: 15A69, 15B57.

\vskip 24pt

\noindent
{\em Key words}: Cayley transform, Hermitian matrix, Kronecker product.

\vskip 24pt

\section{Introduction}

Let $M_n$ be the algebra of all $n\times n$ matrices over the complex field and $H_n\subset M_n$ the subalgebra of Hermitian matrices.
As usual, a conjugate transpose of a complex matrix $A\in M_n$ will be denoted by $A^*$. Now, suppose that $A\in H_n$, i.e., $A^*=A$, and let $I_n$ 
be the $n \times n$ identity matrix. Then $(A+iI_n)^{-1}$ exists and $$U_A=(A-iI_n)(A+iI_n)^{-1}$$ is called a Cayley transform of $A$. It is easy
to see that $U_A$ is a unitary matrix and the inverse transform is given by $$A=i(I_n+U_A)(I_n-U_A)^{-1}.$$ Furthermore, $+1$ cannot be an eigenvalue of $U_A$.
In the following we give some basic examples.

\begin{example}\em

\-

\begin{enumerate}
	\item If $A=I_n$, then $U_A=-iI_n$.
	\item If $A$ is the $n \times n$ zero matrix, i.e., $A=0_n$, then $U_A=-I_n$.
	\item If $A$ is a diagonal matrix, then $U_A$ is also a diagonal matrix. 
	\item If $A$ has degenerate eigenvalues, then $U_A$ has degenerate eigenvalues as well. 
	\item If $A\in H_n$ is unitary, i.e., $A^2=I_n$, then $U_A=-iA$.
	      The Pauli matrices $\sigma_1$, $\sigma_2$ and $\sigma_3$ satisfy these conditions.
\end{enumerate}
\end{example}

The Cayley transform is actually a generalization of a mapping of the complex plane to itself, given by $$U(z)=\frac{z-i}{z+i},\qquad z\in \mathbb C\setminus\{-i\}.$$ 
In particular, $U$ maps the upper half plane of $\mathbb C$ conformally onto the unit disc of $\mathbb C$ and the real line $\mathbb R$ injectively into 
the unit circle. Moreover, no finite point on the real line can be mapped to $+1$ on the unit circle.

\vspace{0,2cm}

Let us continue with some useful properties of the Cayley transform.
\begin{enumerate}
	\item If $V\in M_n$ is invertible, then $U_{VAV^{-1}} = VU_AV^{-1}$ for $A\in H_n$.
	\item If $\mathbf{x}\in\mathbb{C}^n$ is an eigenvector for an eigenvalue $\lambda\in\mathbb{R}$ of a matrix $A\in H_n$, then $\mathbf{x}$
	      is an eigenvector of a Cayley transform $U_A$ and $U(\lambda)=(\lambda-i)/(\lambda+i)$ is its eigenvalue.
	\item If $A\in H_m$ and $B\in H_n$, then $U_{B\otimes A}=PU_{A\otimes B}P^t$, where $P\in M_{mn}$ is the permutation matrix satisfying $P(A\otimes B)P^t=B\otimes A$. Here, 
	      $P^t$ denotes the transpose of a matrix $P$ and $\otimes$ denotes the Kronecker product (see, for example, \cite{10, 4}).
	\item If $A,B\in H_m$ such that $[A,B]=AB-BA=0$ then $[U_A,U_B]=0$.
\end{enumerate}

\vspace{0,2cm}

The Cayley transform is named after Arthur Cayley (see \cite{1,2}). In the last few decades, a lot of results about the Cayley transform and its applications, 
mostly in mathematics and physics, have been obtained. 
For example, Calixto and Perez-Romero \cite{CPR} utilized the Cayley transform for a complex Minkowski space. 
Jadczyk \cite{J} applied the Cayley transform in the compactification of the Minkowski space.
Furthermore, Eisner and Zwart \cite{EZ} studied $C_0$-semigroups and the Cayley transform. 
Noncommutative Cayley transforms have been introduced by Popescu \cite{P} and an application of the Cayley transform for rotation of elasticity tensors has been studied by Norris \cite{N}.

\vspace{0,2cm}

In mathematical physics, the main applications of the Cayley transform is to the Hermitian matrix
$$H=\left(\begin{matrix}
a+d & b-ic\\
b+ic & a-d
\end{matrix}\right)$$
with $a,b,c,d$ real. Here a question is: {\em What is the condition on $a,b,c,d$ such that the matrix $H$ and
the Cayley transform $U_H$ coincide (perhaps up to a phase)?} Note that the eigenvalues of $H$ are
$$\lambda_{1,2} = a \pm \sqrt{b^2+c^2+d^2}$$ and the eigenvalues of $U_H$ are
$$\xi_{1,2}=\frac{\lambda_{1,2} - i}{\lambda_{1,2} + i} = \frac{\lambda_{1,2}^2 - 1}{\lambda_{1,2}^2 + 1} - 2i\frac{\lambda_{1,2}}{\lambda_{1,2}^2 + 1}~.$$
Now, it is easy to see that the eigenvalues of $H$ and the eigenvalues of the Cayley transform $U_H$ 
never coincide since $\lambda_{1,2}$ are real (if $\lambda_{1,2}=0$, then $H=0_2$ and $U_H=-I_2$). On 
the other hand, if the eigenvalues of $H$ and the eigenvalues of $U_H$ differ only by a phase, then 
$$\bigg|\frac{\lambda_{1,2} - i}{\lambda_{1,2} + i}\bigg| = |\lambda_{1,2}|.$$
This yields that $\lambda_{1,2}=\pm 1.$ In each case the phase difference is $-i$ since $U(\lambda_{1,2})=-i\lambda_{1,2}$, i.e., $U_H=-iH$. 
In particular, one of the following holds:
\begin{enumerate}
	\item $a = 1$ and $b=c=d = 0$,
	\item $a = -1$ and $b=c=d= 0$,
	\item $a = 0$ and $a^2 + b^2 + c^2 = 1$.
\end{enumerate}

\vspace{0,2cm}

In the paper, we discuss the following question. Let $A\in H_m$ and $B\in H_n$ be two  Hermitian matrices. Then the Cayley transform provides the unitary matrices 
$U_A$ and $U_B$, respectively. Now, $A \otimes B$ is again a Hermitian matrix and the Cayley transform gives us another  unitary matrix $U_{A \otimes B}$.

\begin{enumerate}
	\item {\em Does there exist a map $g: H_m\times H_n\to H_m\otimes H_n$ such that for all $A\in H_m$ and $B\in H_n$ we have $U_{g(A,B)} = U_A\otimes U_B$ 
	(where it makes sense)?}
	\item {\em What is the condition on $A$ and $B$ such that $U_{A \otimes B}$ can be again presented as a Kronecker product of two matrices?}
	\item {\em What is the condition on $A$ and $B$ such that $U_{A \otimes B}$ can be presented as a Kronecker product of the Cayley transforms of two Hermitian matrices?}
	\item {\em What is the condition on $A$ and $B$ such that $U_{A \otimes B}=U_A\otimes U_B$?}
\end{enumerate}

The above questions are part of a more general question:
For a given map $f : M_m\times M_n \to M_m\otimes M_n$,
we may ask what are the conditions on $A\in M_m$ and $B\in M_n$
such that $f(A \otimes B) = f(A) \otimes f(B)$?
For example, we know that $\exp(A \otimes B) \ne \exp(A) \otimes \exp(B)$ in general.
However, $\exp(A \otimes I_n + I_n \otimes B) \equiv \exp(A) \otimes \exp(B)$
(this relates to the first question).

\vspace{0,2cm}


To conclude our introduction, we give some examples of Hermitian matrices
for each of the last three questions above. First we consider two Hermitian matrices
$A$ and $B$ such that the Cayley transform $U_{A\otimes B}$ cannot be presented
as a Kronecker product of two complex matrices.

\begin{example}\em
\label{Ex1}
Let $ A=B=\diag (1,0)=\left(\begin{smallmatrix}
1 & 0\\
0 & 0
\end{smallmatrix}\right)$ be diagonal $2\times 2$ Hermitian matrices. Then $U_{A\otimes B}=\diag (-i,-1,-1,-1)$
is a diagonal $4\times 4$ matrix which cannot be presented as a Kronecker product of two $2\times 2$ complex matrices.
\end{example}

Now, let us write one simple example showing that $U_{A\otimes B}=U_A\otimes U_B$ 
does not hold in general. In this example the Cayley transform $U_{A\otimes B}$ can
be presented as a Kronecker product of two complex matrices.

\begin{example}\em
\label{Ex2}
Let $A=0_m$ be the $m\times m$ zero matrix and $B=0_n$ be the $n\times n$ zero matrix.
Then $U_A=-I_m$, $U_B=-I_m$, $U_{A\otimes B}=-I_{mn}=U_{I_m}\otimes U_{I_n}$, and $U_A\otimes U_B=I_{mn}$.
Obviously, $U_{A \otimes B} \ne U_A \otimes U_B$.
\end{example}

Finally, we give four simple examples of pairs of $2\times 2$ Hermitian matrices $(A,B)$ satisfying 
$U_{A\otimes B}=U_A\otimes U_B$. 

\begin{example}\em
\label{Ex3}

\-

\begin{enumerate}
  \item $A = -I_2$ and  $B = (1\pm \sqrt{2}) I_2$
	\item $A = -I_2$ and  $B = \diag(1\pm \sqrt{2}, 1\mp\sqrt{2})$
\end{enumerate}
\end{example}

\section{Cayley transform on Hermitian matrices}

First we answer the question: {\em Does there exist a map $g: H_m\times H_n\to H_m\otimes H_n$ such that for all $A\in H_m$ and $B\in H_n$ we have 
$U_{g(A,B)} = U_A\otimes U_B$?}
The answer is positive on the domain where this question makes sense.
Under exponentiation of Hermitian matrices, the Kronecker sum arises naturally
as the unique $f: H_m\times H_n\to H_m\otimes H_n$
satisfying
\begin{equation*}
 \forall A\in H_m,\,B\in H_n:\qquad e^{f(A,B)} = e^A\otimes e^B.
\end{equation*}
The Kronecker sum is given by
\begin{equation*}
 f(A,B) := A\otimes I_n + I_m\otimes B.
\end{equation*}
We seek an analogue for the Cayley transform.
The domain for this problem is
\begin{equation*}
 H'_{m,n} := \{\,(A,B)\in H_m\times H_n\,:\,\text{$U_A\otimes U_B$ does not have 1 as an eigenvalue}\,\}.
\end{equation*}

\begin{theorem}
The function $g: H'_{m,n}\to H_m\otimes H_n$
\begin{equation*}
 g(A,B) = if(U_A^*,-U_B)^{-1}f(U_A^*,U_B) \equiv if(-U_A,U_B^*)^{-1}f(U_A,U_B^*)
\end{equation*}
uniquely satisfies
\begin{equation*}
 \forall (A,B)\in H'_{m,n}:\qquad U_{g(A,B)} = U_A\otimes U_B.
\end{equation*}
\end{theorem}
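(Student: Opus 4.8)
\emph{Proof proposal.}

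The plan is to reduce the whole statement to the inverse Cayley transform. Fix $(A,B)\in H'_{m,n}$ and set $W:=U_A\otimes U_B$. Since $U_A,U_B$ are unitary, $W$ is unitary, and by definition of $H'_{m,n}$ it does not have $1$ as an eigenvalue; hence $I-W$ is invertible and
\begin{equation*}
 X := i(I+W)(I-W)^{-1}
\end{equation*}
is a well-defined Hermitian matrix --- this is exactly the inverse transform recalled in the introduction, and $H_m\otimes H_n$ is all of $H_{mn}$ as a real vector space, so $X$ lies in the claimed codomain. By injectivity of the Cayley transform on Hermitian matrices, $X$ is the \emph{only} Hermitian matrix with $U_X=W$; consequently any $g:H'_{m,n}\to H_m\otimes H_n$ satisfying $U_{g(A,B)}=U_A\otimes U_B$ must agree with $X$ at every point, which already settles uniqueness. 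It then remains to verify that the two displayed expressions for $g(A,B)$ both equal $X$, which simultaneously gives existence and the asserted identity $\equiv$.

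For the first expression I would begin by rewriting the Kronecker sums. Put $R:=I_m\otimes U_B$, which is unitary. From $W^{-1}=W^*=U_A^*\otimes U_B^*$ and $U_B^*U_B=I_n$ one gets $U_A^*\otimes I_n=W^{-1}R$, hence
\begin{equation*}
 f(U_A^*,U_B)=(W^{-1}+I)R,\qquad f(U_A^*,-U_B)=(W^{-1}-I)R.
\end{equation*}
Here $W^{-1}-I$ is invertible, again by the eigenvalue condition on $W$, so
\begin{equation*}
 f(U_A^*,-U_B)^{-1}f(U_A^*,U_B)=R^{-1}(W^{-1}-I)^{-1}(W^{-1}+I)R.
\end{equation*}
Factoring $W^{-1}\mp I=W^{-1}(I\mp W)$ turns the middle factor into $(I-W)^{-1}(I+W)$, and since $I+W$ and $(I-W)^{-1}$ are both rational functions of $W$ they commute, giving $(I+W)(I-W)^{-1}$. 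Finally $R$ commutes with $W$ (indeed $(I_m\otimes U_B)(U_A\otimes U_B)=U_A\otimes U_B^2=(U_A\otimes U_B)(I_m\otimes U_B)$), so the conjugation by $R$ disappears and $i\,f(U_A^*,-U_B)^{-1}f(U_A^*,U_B)=i(I+W)(I-W)^{-1}=X$.

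The second expression $i\,f(-U_A,U_B^*)^{-1}f(U_A,U_B^*)$ is treated the same way, now factoring out $S:=U_A\otimes I_n$ on the right: using $I_m\otimes U_B^*=W^{-1}S$ one obtains $f(U_A,U_B^*)=(I+W^{-1})S$ and $f(-U_A,U_B^*)=(W^{-1}-I)S$, and $S$ commutes with $W$ because $(U_A\otimes I_n)(U_A\otimes U_B)=U_A^2\otimes U_B=(U_A\otimes U_B)(U_A\otimes I_n)$; the identical simplification yields $i(I+W)(I-W)^{-1}=X$. Hence both formulas for $g(A,B)$ agree, and $U_{g(A,B)}=U_X=W=U_A\otimes U_B$, as required. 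I expect the only genuine content to be the factorizations $f(U_A^*,\pm U_B)=(W^{-1}\pm I)(I_m\otimes U_B)$ together with the observation that $I_m\otimes U_B$ (respectively $U_A\otimes I_n$) commutes with $W=U_A\otimes U_B$; after that everything is bookkeeping with commuting rational functions of the single unitary $W$, plus the routine fact (implicit in the introduction) that $i(I+W)(I-W)^{-1}$ is Hermitian whenever $W$ is unitary and $1\notin\sigma(W)$. The one thing to watch throughout is the domain: every inverse written down --- $I-W$, $W^{-1}\pm I$, and the two Kronecker sums --- must be justified from the single hypothesis that $U_A\otimes U_B$ does not have $1$ as an eigenvalue.
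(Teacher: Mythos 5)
Your proposal is correct, and while the underlying algebra is the same as the paper's, it is organized differently in a way that is worth noting. The paper starts from the assumed relation $U_{g(A,B)}=U_A\otimes U_B$, solves the defining equation $(g-iI)(g+iI)^{-1}=(U_A\otimes I_n)(I_m\otimes U_B)$ for $g$ to arrive at the two displayed formulas, justifies invertibility of $f(U_A^*,-U_B)$ and $f(-U_A,U_B^*)$ by a separate eigenvalue computation (showing $x^*-y\neq 0$ whenever $xy\neq 1$ for unimodular $x$), and then checks Hermiticity by a direct manipulation of the Kronecker sums. You instead anchor everything on the single matrix $W=U_A\otimes U_B$ and its inverse Cayley transform $X=i(I+W)(I-W)^{-1}$: uniqueness is immediate from injectivity of the Cayley transform on Hermitian matrices, invertibility of the Kronecker sums falls out of the factorizations $f(U_A^*,\pm U_B)=(W^{-1}\pm I)(I_m\otimes U_B)$ (each factor being invertible because $1\notin\sigma(W)$), and Hermiticity reduces to the standard fact that $i(I+W)(I-W)^{-1}$ is Hermitian for unitary $W$ without eigenvalue $1$ --- a fact you should spell out in one line ($X^*=-i(I-W^{-1})^{-1}(I+W^{-1})=X$ after factoring out $W^{-1}$) rather than leave implicit. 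Your route buys a cleaner uniqueness argument than the paper's ``derived using only invertible operations'' and replaces the eigenvalue lemma by a factorization; the paper's eigenvalue argument, on the other hand, makes the role of the hypothesis $1\notin\sigma(U_A\otimes U_B)$ more visible. Your remark that $H_m\otimes H_n=H_{mn}$ as a real vector space is also correct and is the right reading of the codomain.
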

\begin{proof}
Suppose that a map $g: H'_{m,n}\to H_m\otimes H_n$ satisfies
\begin{equation*}
 \forall (A,B)\in H'_{m,n}:\qquad U_{g(A,B)} = U_A\otimes U_B.
\end{equation*}
This equation provides
\begin{align*}
 (g(A,B)-iI_{mn})(g(A,B)+iI_{mn})^{-1} = (U_A\otimes I_n)(I_m\otimes U_B).
\end{align*}
Therefore,
\begin{align*}
 g(A,B) &= i(U_A^*\otimes I_n-I_m\otimes U_B)^{-1}(U_A^*\otimes I_n+I_m\otimes U_B)\\
	&= if(U_A^*,-U_B)^{-1}f(U_A^*,U_B).
\end{align*}
Obviously $g(A,B)$ is uniquely determined since the expression was derived using
only invertible algebraic operations (the existence of $f(U_A^*,-U_B)^{-1}$
is demonstrated below). Similarly,
\begin{align*}
 (g(A,B)-iI_{mn})(g(A,B)+iI_{mn})^{-1} = (I_m\otimes U_B)(U_A\otimes I_n)
\end{align*}
and, thus,
\begin{equation*}
 g(A,B) = if(-U_A,U_B^*)^{-1}f(U_A,U_B^*).
\end{equation*}
The matrices $f(U_A^*,-U_B)$ and $f(-U_A,U_B^*)$ are invertible
since 1 is not an eigenvalue of $U_A\otimes U_B$.
Namely, if $x$ is an eigenvalue of $U_A$ and $y$ is an
eigenvalue of $U_B$, then the eigenvalues of $f(U_A^*,-U_B)$ are
of the form $x^*-y$. We have
\begin{equation*}
 xy\neq 1
 \quad\Longleftrightarrow\quad
 xy \neq xx^*
 \quad\Longleftrightarrow\quad
 x(x^*-y)\neq 0
 \quad\Longleftrightarrow\quad
 (x^*-y)\neq 0
\end{equation*}
since $xx^*=|x|=1$ and $x\neq 0$. Similarly we can show that $f(-U_A,U_B^*)$
is invertible on $H'_{m,n}$.

Finally, we show that $g(A,B)\in H_m\otimes H_n$. Note that $f(U_A,U_B^*)$
and $f(U_A,-U_B^*)^{-1}$ commute. Thus
\begin{align*}
 g(A,B)^* &= -if(U_A^*,U_B)^*\left(f(U_A^*,-U_B)^{-1}\right)^*\\
          &= -if(U_A,U_B^*)f(U_A,-U_B^*)^{-1}\\
          &= -if(U_A,-U_B^*)^{-1}f(U_A,U_B^*)\\
          &= if(-U_A,U_B^*)^{-1}f(U_A,U_B^*)=g(A,B).
\end{align*}
\end{proof}

\vspace{0,2cm}

Let $A\in H_m$ and $B\in H_n$ be two Hermitian matrices. In the remainder of this section, we first answer the question (Theorem \ref{T1}): 
{\em Under what conditions can we write the Cayley transform of $A\otimes B$ as a Kronecker product $C\otimes D$ with $C\in M_m$, $D\in M_n$?}
The next natural question here is (Theorem \ref{T3}): {\em If $U_{A\otimes B}=C\otimes D$ for some $C\in M_m$, $D\in M_n$, under what conditions $C=U_A$ and $D=U_B$?}

\vspace{0,2cm}

In the following we use a variation of the result about the separability constraints which was proved in \cite{11}. 

\begin{lemma}
\label{L1}
Let $\{\, C_1,\,\ldots,\, C_m\,\}$ be a basis for an $m$-dimensional subspace of $M_m$ and
$\{\, D_1,\,\ldots,\, D_n\,\}$ be a basis for an $n$-dimensional subspace of $M_n$. Then a
matrix $A$ in the form
$$
A=\sum_{j=1}^{m}\sum_{k=1}^{n} a_{j,k} C_j\otimes D_k
$$
can be written as $A_1\otimes A_2$ with $A_1\in M_m$ and $A_2\in M_n$
if and only if
$$a_{p,q}a_{r,s}=a_{p,s}a_{r,q}$$
for all $p,r\in\{\,1,\,\ldots,\,m\,\}$ and $q,s\in\{\,1,\,\ldots,\,n\,\}$.
\end{lemma}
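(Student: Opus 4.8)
The plan is to establish the two implications separately, treating the "only if" direction as the essentially routine one and the "if" direction as the one requiring the structural input.

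\textbf{Setup and the easy direction.} First I would fix bases $\{C_1,\dots,C_m\}$ and $\{D_1,\dots,D_n\}$ as in the statement and observe that the products $\{C_j\otimes D_k\}_{j,k}$ are linearly independent in $M_m\otimes M_n$: any linear dependence among them would, after grouping, contradict the linear independence of the $C_j$ (using that the $D_k$ are independent). Hence the coefficients $a_{j,k}$ in the expansion of $A$ are uniquely determined by $A$. For the "only if" direction, suppose $A=A_1\otimes A_2$. Writing $A_1=\sum_j \alpha_j C_j$ is not possible in general since $\{C_j\}$ spans only a subspace; instead I would argue directly. Since $A$ lies in the span of the $C_j\otimes D_k$, and $A=A_1\otimes A_2$, I expand $A_1$ and $A_2$ in any fixed bases of $M_m$ and $M_n$ respectively and compare — but the cleaner route is: the map sending the matrix of coefficients $(a_{j,k})$ to $A$ is a linear isomorphism onto the span, so the condition $a_{p,q}a_{r,s}=a_{p,s}a_{r,q}$ is exactly the statement that the matrix $(a_{j,k})\in M_{m\times n}$ has rank $\le 1$. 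So the whole lemma reduces to: \emph{$A$ is a Kronecker product iff its coefficient matrix $(a_{j,k})$ has rank $\le 1$.} If $A=A_1\otimes A_2$ then, expressing things via the coordinate isomorphism, $(a_{j,k})$ factors as an outer product of the coordinate vectors, hence has rank $\le 1$; this gives "only if".

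\textbf{The "if" direction.} Conversely, suppose the rank-one condition holds, i.e.\ $a_{j,k}=b_j c_k$ for scalars $b_j,c_k$ (the degenerate all-zero case being trivial since $A=0=0\otimes 0$). Then
\begin{align*}
 A=\sum_{j,k} b_j c_k\, C_j\otimes D_k
  =\Big(\sum_j b_j C_j\Big)\otimes\Big(\sum_k c_k D_k\Big),
\end{align*}
using bilinearity of $\otimes$, and we take $A_1=\sum_j b_j C_j\in M_m$, $A_2=\sum_k c_k D_k\in M_n$. The only subtlety is recovering the factorization $a_{j,k}=b_jc_k$ from the determinantal conditions $a_{p,q}a_{r,s}=a_{p,s}a_{r,q}$: pick any index $(r,s)$ with $a_{r,s}\ne 0$, set $b_j=a_{j,s}$ and $c_k=a_{r,k}/a_{r,s}$, and verify $b_jc_k=a_{j,k}$ using exactly the hypothesis; if no such nonzero entry exists, $A=0$.

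\textbf{Main obstacle.} The genuinely delicate point is the linear independence of $\{C_j\otimes D_k\}$ (equivalently, that the coefficients $a_{j,k}$ are well-defined), since the $C_j$ and $D_k$ span only subspaces rather than all of $M_m$, $M_n$. This does hold — independence of the tensor products follows formally from independence of each family — but it must be invoked carefully so that the coefficient matrix $(a_{j,k})$ is unambiguous; without it the statement of the lemma would not even be well-posed. Everything else is bilinearity of the Kronecker product plus the elementary fact that a matrix has rank $\le 1$ precisely when all its $2\times 2$ minors vanish, which is what the condition $a_{p,q}a_{r,s}=a_{p,s}a_{r,q}$ records. I would also remark that this is the variant of the separability criterion of \cite{11} adapted to arbitrary (not necessarily Hermitian or orthonormal) bases, which is the only generality we need for the applications in Theorems~\ref{T1} and~\ref{T3}.
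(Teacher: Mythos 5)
The paper itself does not prove Lemma~\ref{L1}; it cites the separability criterion of \cite{11}, so there is no in-paper argument to compare against. Your reduction of the lemma to the statement that $A$ is a Kronecker product iff the coefficient matrix $(a_{j,k})$ has rank at most one is the right one, your ``if'' direction is complete (including the explicit recovery of the factorization $a_{j,k}=b_jc_k$ from the vanishing of the $2\times 2$ minors), and your identification of the linear independence of the $C_j\otimes D_k$ as a necessary preliminary is correct.

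There is, however, one genuine gap, and it sits exactly where you half-noticed it and then stepped over it: in the ``only if'' direction you write that $(a_{j,k})$ ``factors as an outer product of the coordinate vectors'' of $A_1$ and $A_2$. But $A_1$ and $A_2$ are arbitrary elements of $M_m$ and $M_n$ --- the lemma does not require them to lie in $V:=\mathrm{span}\{C_j\}$ or $W:=\mathrm{span}\{D_k\}$ --- so a priori they have no coordinate vectors with respect to these bases. (You say this yourself one sentence earlier: ``Writing $A_1=\sum_j\alpha_jC_j$ is not possible in general.'') What is missing is the claim that a nonzero simple tensor lying in $V\otimes W$ must have both factors in $V$ and $W$ respectively. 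This is true and easy: choose complements $V'$, $W'$ with $M_m=V\oplus V'$ and $M_n=W\oplus W'$, write $A_1=v+v'$, $A_2=w+w'$, and expand $A_1\otimes A_2$ along the direct sum $M_m\otimes M_n=(V\otimes W)\oplus(V\otimes W')\oplus(V'\otimes W)\oplus(V'\otimes W')$; membership of $A_1\otimes A_2$ in $V\otimes W$ forces $v\otimes w'=v'\otimes w=v'\otimes w'=0$, and if $A\neq 0$ (the case $A=0$ being trivial) this yields $v'=0$ and $w'=0$. With that inserted, $A_1\in V$ and $A_2\in W$ do have coordinate vectors, $(a_{j,k})$ is their outer product by uniqueness of the coefficients, and your proof closes.
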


Now we are in the position to write our first result.

\begin{theorem}
\label{T1}
Let $A\in H_m$ and $B\in H_n$. Then the Cayley transform of $A\otimes B$ can be written as $C\otimes D\in M_m\otimes M_n$ 
if and only if one of the following conditions is fulfilled.
\begin{itemize}
	\item[(a)] $A$ has one eigenvalue.
	\item[(b)] $B$ has one eigenvalue.
	\item[(c)] $A$ has two eigenvalues $a_1,a_2$ and $B$ has two eigenvalues $b_1,b_2$ such that $a_1 b_1 a_2 b_2 = 1$.
\end{itemize}
\end{theorem}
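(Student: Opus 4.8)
The plan is to diagonalize and reduce everything to the spectral data of $A$ and $B$. Write $A = V \diag(a_1,\dots,a_1,a_2,\dots)V^{-1}$ and $B = W \diag(b_1,\dots)W^{-1}$ with $V,W$ unitary, so that by property (1) of the Cayley transform, $U_{A\otimes B} = (V\otimes W)\,U_{\Lambda_A \otimes \Lambda_B}\,(V\otimes W)^{-1}$ where $\Lambda_A,\Lambda_B$ are the diagonal matrices of eigenvalues. Since $U_{A\otimes B}$ being a Kronecker product $C\otimes D$ is invariant under conjugation by $V\otimes W$ (absorbing the factors into $C$ and $D$), we may assume $A$ and $B$ are themselves diagonal. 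Then $U_{A\otimes B}$ is the diagonal matrix with entries $U(a_j b_k) = (a_j b_k - i)/(a_j b_k + i)$.

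Next I would apply Lemma \ref{L1} with $C_j = E_{jj}$ (diagonal matrix units) as a basis for the diagonal subspace of $M_m$ and $D_k = E_{kk}$ for $M_n$. The diagonal matrix $U_{A\otimes B}$ has coefficients $\alpha_{j,k} = U(a_j b_k)$, so $U_{A\otimes B}$ factors as a Kronecker product if and only if $\alpha_{p,q}\alpha_{r,s} = \alpha_{p,s}\alpha_{r,q}$ for all indices. Here one must be slightly careful: Lemma \ref{L1} as stated requires $\{C_j\}$ and $\{D_k\}$ to be bases of $m$- and $n$-dimensional subspaces, which holds; but if $A$ has a repeated eigenvalue the relevant distinct-value constraints collapse, so the cleanest route is to first reduce to the case where $A$ has $m' \le m$ distinct eigenvalues and $B$ has $n' \le n$ distinct eigenvalues, with the factorization question depending only on the $m'\times n'$ array of distinct values. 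Cases (a) and (b) — one eigenvalue — then fall out immediately, since a $1\times n'$ or $m'\times 1$ array trivially satisfies the rank-one condition (the factor is a scalar times an identity-like block, reassembled through $V,W$).

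The heart of the argument is the case $m', n' \ge 2$: I must show the condition $U(a_p b_q)U(a_r b_s) = U(a_p b_s)U(a_r b_q)$ for all $p\ne r$, $q\ne s$ forces $m' = n' = 2$ together with $a_1 a_2 b_1 b_2 = 1$. The key identity is that $|U(t)| = 1$ for real $t$, so taking absolute values gives no information, but the cross-ratio-type structure of $U$ is rigid. Concretely, $U(x)U(y) = U(z)U(w)$ with all of $x,y,z,w$ of the form (product of an $a$ and a $b$) is a strong multiplicative constraint; expanding $U(x)U(y) = \frac{(x-i)(y-i)}{(x+i)(y+i)} = \frac{xy - 1 - i(x+y)}{xy - 1 + i(x+y)}$ shows that $U(x)U(y)$ depends only on the pair $(xy - 1, \ x+y)$ up to real scaling, i.e. $U(x)U(y) = U(z)U(w)$ iff $(xy-1)(z+w) = (zw-1)(x+y)$. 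Feeding in $x = a_p b_q$ etc. and exploiting that this must hold for all choices of distinct $p,r$ and $q,s$ will, after elimination, rule out $m' \ge 3$ or $n' \ge 3$ and pin down the single surviving relation $a_1 b_1 a_2 b_2 = 1$ in the $2\times 2$ case. This elimination — showing that the polynomial identities in the $a$'s and $b$'s have no solutions except the claimed one — is the main obstacle; I expect to handle it by fixing $p,r$ and viewing the constraint as forcing all $U(a_p b_k)/U(a_r b_k)$ to be equal across $k$, then analyzing when the Möbius map $t \mapsto U(a_p t)/U(a_r t)$ can be constant on three or more points of the form $b_k$, which happens only degenerately. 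For the converse, assuming (a), (b), or (c) one simply writes down the factorization explicitly: under (c) with $a_1 a_2 b_1 b_2 = 1$ one checks directly that the $2\times 2\times 2\times 2$ array of values $U(a_j b_k)$ has the rank-one pattern, hence Lemma \ref{L1} yields $C\otimes D$, and conjugating back by $V\otimes W$ completes the proof.
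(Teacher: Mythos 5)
Your setup is exactly the paper's: spectral decomposition of $A\otimes B$, Lemma \ref{L1} applied to the rank-one projectors, and the resulting cross-product condition $U(a_pb_q)U(a_rb_s)=U(a_pb_s)U(a_rb_q)$. (One small remark: the detour through distinct eigenvalues is unnecessary and sits awkwardly with Lemma \ref{L1}, whose hypotheses ask for $m$ basis elements of $M_m$; the paper simply takes $C_j=x_jx_j^*$ for all $m$ eigenvectors counted with multiplicity --- these are always linearly independent --- and the constraints for repeated indices are vacuous anyway.) The genuine gap is that the heart of the ``only if'' direction, which you yourself flag as ``the main obstacle,'' is not carried out: you propose to rule out three or more distinct eigenvalues by analyzing when a ratio of M\"obius maps is constant on three points, and to ``pin down'' the relation $a_1a_2b_1b_2=1$ ``after elimination,'' but neither computation is done. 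As written, the proof establishes the sufficiency of (a) and (b) and the general shape of the necessity argument, but not the necessity of (c) nor the exclusion of larger spectra.

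The frustrating part is that you are one substitution away from closing the gap with no M\"obius analysis at all. Your identity says $U(x)U(y)=U(z)U(w)$ iff $(xy-1)(z+w)=(zw-1)(x+y)$. For the specific values $x=a_pb_q$, $y=a_rb_s$, $z=a_pb_s$, $w=a_rb_q$ one has $xy=zw=a_pa_rb_qb_s$, so your condition collapses to $(a_pa_rb_qb_s-1)\bigl((z+w)-(x+y)\bigr)=0$, and $(z+w)-(x+y)=(a_p-a_r)(b_s-b_q)$. Hence the factorization criterion is exactly
\[
(a_p-a_r)(b_q-b_s)(a_pa_rb_qb_s-1)=0
\]
for all indices, which is the paper's equation \eqref{E1.2}. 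From here the elimination is two lines: if $A$ and $B$ each have at least two distinct eigenvalues, pick $a_p\neq a_r$ and $b_q\neq b_s$ to get $a_pa_rb_qb_s=1$ (so all four are nonzero); a third distinct eigenvalue $a_t$ of $A$ would give $a_pa_tb_qb_s=1=a_pa_rb_qb_s$ and hence $a_t=a_r$, a contradiction, so $A$ (and symmetrically $B$) has exactly two eigenvalues and the surviving relation is $a_1a_2b_1b_2=1$. I recommend replacing your sketched elimination with this computation; it simultaneously disposes of the $2\times 2$ case and the $\geq 3$ eigenvalue case.
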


\begin{proof}
Let $A\in H_m$ be a Hermitian matrix with eigenvalues $\{a_1,\ldots,a_m\}$ 
and corresponding orthonormal eigenvectors $\{x_1,\ldots,x_m\}$ and let $B\in H_n$ be a Hermitian matrix with eigenvalues $\{b_1,\ldots,b_n\}$ and corresponding orthonormal
eigenvectors $\{y_1,\ldots,y_n\}$. Then the Cayley transform of $A\otimes B$ can be written in the form
$$U_{A\otimes B}=\sum_{j=1}^m\sum_{k=1}^n \frac{a_j b_k-i}{a_j b_k+i} x_j x_j^*\otimes y_k y_k^*~.$$
We already know that the eigenvectors are preserved under the Cayley transform. Thus, by Lemma \ref{L1} (with identifying $C_j\to x_j x_j^*$, $D_k\to y_k y_k^*$, and
$a_{j,k}\to(a_j b_k-i)/(a_j b_k+i)$), we find that
\begin{equation}
\label{E1.1}
U_{A\otimes B}=C\otimes D
\end{equation} 
for some $C\in M_m$ and $D\in M_n$ if and only if 
$$\frac{a_p b_q-i}{a_p b_q+i}\cdot\frac{a_r b_s-i}{a_r b_s+i}= \frac{a_p b_s-i}{a_p b_s+i}\cdot\frac{a_r b_q-i}{a_r b_q+i}~$$
for all $1\le j,p\le m$ and $1\le k,q\le n$.
This equation can be rewritten as
\begin{equation}
\label{E1.2}
(a_p - a_r)(b_q - b_s)(a_pa_rb_qb_s - 1) = 0.
\end{equation}
 It follows that either $a_j b_k a_p b_q = 1$ or $a_j = a_p$ or $b_k = b_q$ must hold for all eigenvalues $a_j$, 
$a_p$ and $b_j$,$b_q$ of $A$ and $B$, respectively.

\vspace{0,2cm}

Obviously, if either (a), (b), or (c) holds, then \eqref{E1.2} is fulfilled and we are done. For the converse implication,
suppose that equation \eqref{E1.2} holds and assume that neither $A$ nor $B$ has only one eigenvalue. Let $a_p,a_r$ and $b_q,b_s$ 
be two distinct eigenvalues of $A$ and $B$ respectively. It follows from \eqref{E1.2} that $a_pa_rb_qb_s = 1$. Thus,
$a_p\neq0$, $a_r\neq 0$, $b_q\neq 0$, and $b_s\neq 0$. Let $a_{t}$ be an eigenvalue
of $A$ and assume $a_{t}$ is distinct from $a_p$ and $a_r$. 
It follows that $a_pa_tb_qb_s = a_pa_rb_qb_s = 1$ which yields $a_t = a_r$, a contradiction.
Thus, $A$ has 2 eigenvalues and they are nonsingular. Similarly $B$ has 2 eigenvalues and they are nonsingular. If we denote the eigenvalues of $A$ with $a_1, a_2$
and the eigenvalues of $B$ with $b_1, b_2$, then the equation \eqref{E1.2} reduces to $a_1a_2b_1b_2 = 1$. The proof is completed.
\end{proof}

\begin{theorem}
\label{T2}
Let $A\in H_m$ and $B\in H_n$ be such that the Cayley transform $U_{A\otimes B}=C'\otimes D'\in M_m\otimes M_n$.
Then there exist $C\in H_m$ and $D\in H_n$ such that $U_{A\otimes B}=U_C\otimes U_D$.
\end{theorem}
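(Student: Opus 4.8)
The plan is to rely only on the hypothesis $U_{A\otimes B}=C'\otimes D'$ and the fact that this matrix is unitary, together with the inverse Cayley transform, rather than on the case analysis of Theorem \ref{T1}. The point is that a factorisation $U_{A\otimes B}=C'\otimes D'$ determines $C'$ and $D'$ only up to a positive scalar and a unimodular phase, and these two degrees of freedom are precisely what is needed to rescale $C'$ and $D'$ into genuine Cayley transforms.

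First I would note that since $U_{A\otimes B}=C'\otimes D'$ is unitary and hence invertible, both $C'$ and $D'$ are invertible, and
\[
 (C'^*C')\otimes(D'^*D')=(C'\otimes D')^*(C'\otimes D')=I_{mn}.
\]
Here $P:=C'^*C'$ and $Q:=D'^*D'$ are positive definite Hermitian matrices. The one nontrivial ingredient is the elementary fact that if $P\in M_m$ and $Q\in M_n$ are positive definite with $P\otimes Q=I_{mn}$, then $P=\alpha I_m$ and $Q=\alpha^{-1}I_n$ for some $\alpha>0$: the eigenvalues of $P\otimes Q$ are the products of an eigenvalue of $P$ with an eigenvalue of $Q$, so they can all equal $1$ only when $P$ and $Q$ each have a single eigenvalue. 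Applying this gives $\alpha>0$ such that $\tilde C:=\alpha^{-1/2}C'$ and $\tilde D:=\alpha^{1/2}D'$ are unitary and $\tilde C\otimes\tilde D=C'\otimes D'=U_{A\otimes B}$.

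Next I would adjust the phase. For a real parameter $\phi$ put $U_C:=e^{i\phi}\tilde C$ and $U_D:=e^{-i\phi}\tilde D$; these are still unitary and still satisfy $U_C\otimes U_D=U_{A\otimes B}$. Since $\tilde C$ and $\tilde D$ have unimodular eigenvalues, $U_C$ has $1$ as an eigenvalue only when $e^{i\phi}=\overline{\mu}$ for some eigenvalue $\mu$ of $\tilde C$, and $U_D$ has $1$ as an eigenvalue only when $e^{i\phi}=\nu$ for some eigenvalue $\nu$ of $\tilde D$. As there are only finitely many such forbidden values on the unit circle, we may choose $\phi$ avoiding all of them, so that neither $U_C$ nor $U_D$ has $1$ as an eigenvalue.

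Finally I would apply the inverse Cayley transform: set $C:=i(I_m+U_C)(I_m-U_C)^{-1}$ and $D:=i(I_n+U_D)(I_n-U_D)^{-1}$. A routine computation (exactly as recalled in the introduction) shows that $C\in H_m$, $D\in H_n$, and that $U_C$ and $U_D$ are their respective Cayley transforms, whence $U_{A\otimes B}=U_C\otimes U_D$, as required. The only real obstacle is the small lemma on $P\otimes Q=I_{mn}$; the rest is bookkeeping with unimodular scalars. One could alternatively argue case by case using the three possibilities of Theorem \ref{T1}, building $C$ and $D$ explicitly from the spectral projections of $A$ and $B$, but the rescaling argument above sidesteps that split.
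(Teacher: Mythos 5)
Your proof is correct and follows essentially the same route as the paper: normalise $C'$ and $D'$ to be unitary, rotate by a phase so that neither has $1$ as an eigenvalue, and then apply the inverse Cayley transform. The only difference is that you explicitly justify the unitary normalisation via the observation that $(C'^*C')\otimes(D'^*D')=I_{mn}$ forces both factors to be positive scalar multiples of the identity, a step the paper simply asserts with ``we may assume that $C'$ and $D'$ are unitary.''
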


\begin{proof}
We may assume that $C'$ and $D'$ are unitary. Furthermore, we may assume that neither $C'$ nor $D'$
have 1 as an eigenvalue. Namely, there exists $\theta\in\mathbb{R}$ such that neither $e^{i\theta}C'$
nor $e^{-i\theta}D'$ have 1 as an eigenvalue (since $C'$ an $D'$ are finite). Thus, by the invertibility of the Cayley transform,
there exist $C\in H_m$ and $D\in H_n$ such that $U_C=C'$ and $U_D=D'$. Consequently, $U_{A\otimes B}=U_C\otimes U_D$.
\end{proof}

The next theorem answers the question when $U_{A\otimes B}=U_A\otimes U_B$.

\begin{theorem}
\label{T3}
Let $A\in H_m$ and $B\in H_n$. Then $U_{A\otimes B}=U_A\otimes U_B$ if and only if one of the following conditions is fulfilled.
\begin{itemize}
	\item[(a)] $A$ has one eigenvalue $a\neq 0$ and $B$ has one or two eigenvalues given by
                   $$b_{1,2}=\frac{a(1-a)\pm\sqrt{a^2(1-a)^2-4a}}{2a}.$$
	\item[(b)] $B$ has one eigenvalue $b\neq 0$ and $A$ has one or two eigenvalues given by
                   $$a_{1,2}=\frac{b(1-b)\pm\sqrt{b^2(1-b)^2-4b}}{2b}.$$
\end{itemize}
\end{theorem}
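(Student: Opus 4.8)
The plan is to reduce the matrix identity $U_{A\otimes B}=U_A\otimes U_B$ to a family of scalar identities by passing to a common eigenbasis, then to analyze the resulting polynomial condition by a short case analysis on the numbers of distinct eigenvalues of $A$ and $B$.

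First I would fix orthonormal eigenbases $\{x_j\}_{j=1}^m$ of $A$ (with eigenvalues $a_j$) and $\{y_k\}_{k=1}^n$ of $B$ (with eigenvalues $b_k$), so that $\{x_j\otimes y_k\}$ is an orthonormal eigenbasis of $A\otimes B$ with $(A\otimes B)(x_j\otimes y_k)=a_jb_k\,(x_j\otimes y_k)$. Since $U_{A\otimes B}$ is a function of $A\otimes B$, it acts on $x_j\otimes y_k$ by the scalar $u(a_jb_k)$, where $u(t)=(t-i)/(t+i)$, while $(U_A\otimes U_B)(x_j\otimes y_k)=u(a_j)u(b_k)\,(x_j\otimes y_k)$ by the eigenvector-preservation property of the Cayley transform recorded in the introduction. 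As $\{x_j\otimes y_k\}$ spans $\mathbb C^{mn}$, we obtain $U_{A\otimes B}=U_A\otimes U_B$ if and only if $u(a_jb_k)=u(a_j)u(b_k)$ for all $j,k$. A direct computation (clearing denominators, all nonzero since $a,b$ are real, and comparing imaginary parts) shows that for real $a,b$ the scalar equation $u(ab)=u(a)u(b)$ is equivalent to $a^2b+ab^2-ab+1=0$, i.e.\ $ab(a+b-1)=-1$.

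Next comes the case analysis. Setting $a=0$ in $ab(a+b-1)=-1$ yields $0=-1$, so neither $A$ nor $B$ can have $0$ as an eigenvalue. Read $a^2b+ab^2-ab+1=0$ as the quadratic $ab^2+(a^2-a)b+1=0$ in $b$, whose leading coefficient $a$ is nonzero: if $A$ has a single eigenvalue $a$, then every eigenvalue of $B$ is one of its two roots, and these roots are exactly the $b_{1,2}$ displayed in part (a) (they are real because a Hermitian $B$ forces real eigenvalues, which is the tacit content of the formula); symmetrically one gets part (b) when $B$ has a single eigenvalue. It remains to exclude the case where $A$ has two distinct eigenvalues $a_1\neq a_2$ and $B$ has two distinct eigenvalues $b_1\neq b_2$ simultaneously: then $b_1,b_2$ are the two roots of the quadratic for $a=a_1$ and also for $a=a_2$, so by Vieta's formulas $b_1b_2=1/a_1=1/a_2$, contradicting $a_1\neq a_2$. (Alternatively, Theorem \ref{T1} already confines us to its three cases, and the same Vieta argument kills case (c).) Hence (a) or (b) must hold, and the converse is immediate: if (a) holds then $u(a\,b_k)=u(a)u(b_k)$ for every eigenvalue $b_k$ of $B$, so the first step gives $U_{A\otimes B}=U_A\otimes U_B$, and likewise for (b).

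The only subtle points are bookkeeping: verifying that the roots of $ab^2+(a^2-a)b+1=0$ are precisely the displayed $b_{1,2}$, and checking that the statement correctly absorbs the degenerate sub-cases — zero discriminant, where $b_1=b_2$ and $B$ has a single eigenvalue, and the implicit requirement that the discriminant be nonnegative so that a Hermitian $B$ with those eigenvalues exists. Beyond the elementary algebraic identity for $u(ab)=u(a)u(b)$ and this verification, I expect no real obstacle.
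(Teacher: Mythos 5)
Your proposal is correct and follows essentially the same route as the paper: diagonalize both matrices, reduce $U_{A\otimes B}=U_A\otimes U_B$ to the scalar condition $a_jb_k(1-a_j-b_k)=1$, and then do a case analysis on the numbers of distinct eigenvalues, solving the quadratic $ab^2+(a^2-a)b+1=0$ to obtain the displayed $b_{1,2}$. The only difference is local: where the paper rules out the case of two distinct eigenvalues on each side by combining the four resulting equations with the constraint $a_1a_2b_1b_2=1$ from Theorem \ref{T1}, you apply Vieta's formulas to get $b_1b_2=1/a_1=1/a_2$, which is a slightly cleaner contradiction and makes the argument independent of Theorem \ref{T1}.
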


\begin{proof}
Let $A\in H_m$ be a Hermitian matrix with eigenvalues $\{a_1,\ldots,a_m\}$ and corresponding orthonormal eigenvectors $\{x_1,\ldots,x_m\}$ 
and let $B\in H_n$ be a Hermitian matrix with eigenvalues $\{b_1,\ldots,b_n\}$ and corresponding orthonormal eigenvectors $\{y_1,\ldots,y_n\}$.  Then 
the Cayley transform of $A$, $B$ and $A\otimes B$ can be written as
$$U_{A}=\sum_{j=1}^m \frac{a_j-i}{a_j+i} x_j x_j^*,\qquad U_{B}=\sum_{k=1}^n \frac{b_k-i}{b_k+i} y_k y_k^*,$$
$$U_{A\otimes B}=\sum_{j=1}^m\sum_{k=1}^n \frac{a_j b_k-i}{a_j b_k+i} x_j x_j^*\otimes y_k y_k^*~.$$
Comparing $U_A\otimes U_B$ and $U_{A\otimes B}$ yields
$$ \frac{a_j-i}{a_j+i}\cdot \frac{b_k-i}{b_k+i} = \frac{a_j b_k-i}{a_j b_k+i} $$
or equivalently
\begin{equation}
\label{E1.3}
 a_jb_k(1-a_j-b_k) = 1,
\end{equation}
where $1\leq j\leq m$ and $1\leq k\leq n$. So, $U_{A\otimes B}=U_A\otimes U_B$ if and only if the relation (\ref{E1.3}) holds for all $1\leq j\leq m$ and $1\leq k\leq n$.

Now, if either (a) or (b) holds, then it is easy to see that \eqref{E1.3} is fulfilled and we are done. For the converse implication, suppose that the relation 
\eqref{E1.3} holds for all $1\leq j\leq m$ and $1\leq k\leq n$. 
Since we assumed that $U_{A\otimes B}=U_A\otimes U_B$, 
Theorem \ref{T1} implies three cases.

\vspace{0,2cm}

\noindent 
{\em Case 1.}
Suppose that $A$ has one eigenvalue $a$. Then $$ ab_k^2-a(1-a)b_k+1 = 0 $$ for all $1\leq k\leq n$. Note that $a\ne 0$. Thus, it is easy to compute that
$$b_k=\frac{a(1-a)\pm\sqrt{a^2(1-a)^2-4a}}{2a}.$$ In particular, $B$ has one or two eigenvalues as in the case (a).

\vspace{0,2cm}

\noindent 
{\em Case 2.}
If $B$ has one eigenvalue, then, using the same arguments as in the previous case, we obtain (b).

\vspace{0,2cm}

\noindent 
{\em Case 3.}
Finally, suppose that $A$ has two eigenvalues $a_1, a_2$ and $B$ has two eigenvalues $b_1, b_2$ such that $a_1b_1a_2b_2=1$ and which satisfy \eqref{E1.3}
for all $j,k\in\{1,2\}$. However, no such $a_1,a_2,b_1,b_2\in\mathbb{R}$ exist. Namely, from \eqref{E1.3} we have the following four equations
\begin{align*}
		 a_1b_1(1-(a_1+b_1))&=1,&
		 a_1b_2(1-(a_1+b_2))&=1,\\
		 a_2b_1(1-(a_2+b_1))&=1,&
		 a_2b_2(1-(a_2+b_2))&=1.
\end{align*}
Using $a_1b_1a_2b_2=1$, we obtain
\begin{align*}
\label{E1.4}
		 (1-(a_1+b_1))&=a_2b_2,&
		 (1-(a_1+b_2))&=a_2b_1,\\
		 (1-(a_2+b_1))&=a_1b_2,&
		 (1-(a_2+b_2))&=a_1b_1.
\end{align*}
Thus, we find
\begin{align*}
 1&=a_2b_2a_1b_1=(1-(a_1+b_1))(1-(a_2+b_2))\\
  &=a_2b_1a_1b_2=(1-(a_1+b_2))(1-(a_2+b_1)).
\end{align*}
It follows that
$$(a_1-a_2)(b_1-b_2)=0$$
which cannot be satisfied since $a_1\neq a_2$ and $b_1\neq b_2$.
Hence, this case does not yield any solutions.
\end{proof}

\begin{corollary}
\label{C1}
For all $A\in H_m$, we have $U_{A\otimes A}\ne U_A\otimes U_A$.
\end{corollary}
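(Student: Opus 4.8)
The plan is to argue by contradiction and feed Theorem~\ref{T3} the special case $B=A$. So assume $U_{A\otimes A}=U_A\otimes U_A$. Then $U_{A\otimes A}$ is itself a Kronecker product, so Theorem~\ref{T1} already forces $A$ to have at most two distinct eigenvalues, and if it has exactly two, say $a_1\neq a_2$, then $(a_1a_2)^2=1$. Hence only two situations can occur, and I would eliminate them in turn.

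Suppose first $A$ has two distinct eigenvalues $a_1\neq a_2$. Since $B=A$, the matrix $B$ has the same pair of eigenvalues, so substituting $b_i=a_i$ in Case~3 of the proof of Theorem~\ref{T3} turns the relation $(a_1-a_2)(b_1-b_2)=0$ established there into $(a_1-a_2)^2=0$, which is impossible. Now suppose $A=aI_m$ is scalar. If $a=0$ then $U_{A\otimes A}=U_{0_{m^2}}=-I_{m^2}$ while $U_A\otimes U_A=(-I_m)\otimes(-I_m)=I_{m^2}$, so the two differ. If $a\neq 0$, Theorem~\ref{T3} places us in alternative~(a) with $B=A$, so the single eigenvalue $a$ of $B$ must be one of the roots $b_{1,2}$ of $ax^2-a(1-a)x+1$; since $B$ has only this one eigenvalue, alternative~(a) forces $b_1=b_2=a$. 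Vieta's formulas for that quadratic give $b_1+b_2=1-a$ and $b_1b_2=1/a$, hence $2a=1-a$ and $a^2=1/a$ simultaneously, i.e.\ $a=1/3$ and $a^3=1$ --- a contradiction. Assembling the cases yields $U_{A\otimes A}\neq U_A\otimes U_A$ for every $A\in H_m$.

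The step I expect to be the main obstacle is the scalar case $A=aI_m$. Everything rests on correctly reading off from Theorem~\ref{T3}(a) the constraint that the coincidence $B=A$ imposes on the single scalar $a$ --- in particular on justifying that it pins down $b_1=b_2=a$ rather than merely $a\in\{b_1,b_2\}$ --- since the contradiction is produced exactly by extracting two incompatible equations for $a$ out of that one coincidence. The two-eigenvalue case and the appeal to Theorem~\ref{T1} are, by comparison, routine.
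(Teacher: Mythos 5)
You have reproduced the paper's argument almost verbatim --- the paper likewise reduces to $A=aI_m$ via Theorem~\ref{T3} and then demands both $a=\frac{a(1-a)}{2a}$ and $a^2(1-a)^2-4a=0$, which is exactly your ``$b_1=b_2=a$'' condition unpacked by Vieta. But the step you flagged as the main obstacle is a genuine gap, in your write-up and in the paper alike. What Case~1 of the proof of Theorem~\ref{T3} actually establishes is that each eigenvalue of $B$ is \emph{a} root of $ax^2-a(1-a)x+1=0$; when $B$ has a single eigenvalue, nothing forces that root to be a double root, so condition (a) only yields $a\in\{b_1,b_2\}$, not $b_1=b_2=a$. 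The correct constraint in the scalar case is therefore $a\cdot a^2-a(1-a)\cdot a+1=0$, i.e.\ the cubic $2a^3-a^2+1=0$, which has a real root $a_0\approx-0.657$. No contradiction follows.

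Worse, the gap cannot be repaired, because the corollary fails at that root: for $A=a_0I_m$ one has $U_A\otimes U_A=\bigl(\frac{a_0-i}{a_0+i}\bigr)^2I_{m^2}$ and $U_{A\otimes A}=\frac{a_0^2-i}{a_0^2+i}I_{m^2}$, and the identity $\bigl(\frac{a_0-i}{a_0+i}\bigr)^2=\frac{a_0^2-i}{a_0^2+i}$ is equivalent (by cross-multiplying, or by specializing \eqref{E1.3} to $a_j=b_k=a_0$) to $a_0^2(1-2a_0)=1$, which is precisely $2a_0^3-a_0^2+1=0$. So $U_{A\otimes A}=U_A\otimes U_A$ for this $A$. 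Your handling of the remaining cases (two distinct eigenvalues via Case~3 of Theorem~\ref{T3}, and $a=0$) is sound, but the nonzero scalar case cannot be closed by this route: the vanishing-discriminant condition that you and the paper both impose is not supplied by Theorem~\ref{T3}.
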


\begin{proof}
If $U_A \otimes U_A = U_{A \otimes A}$ then, by Theorem \ref{T3},
$A = a I_m$ for some $a\in\mathbb{R}\setminus\{0\}$ with 
\begin{equation*}
 a = \frac{a(1-a)}{2a} \qquad  {\rm and} \qquad a^2(1-a)^2-4a = 0,
\end{equation*}
which has no solutions for $a\in\mathbb{R}$.
\end{proof}

\begin{corollary}
\label{C2}
For all $B\in H_n$, we have $U_{I_m\otimes B}\ne U_{I_m}\otimes U_B$. Similarly,
for all $A\in H_m$, we have $U_{A\otimes I_n}\ne U_A\otimes U_{I_n}$.
\end{corollary}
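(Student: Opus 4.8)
The plan is to reduce the statement to a one-line computation. First I would recall from the introductory example that $U_{I_m} = -iI_m$, so that $U_{I_m}\otimes U_B = (-iI_m)\otimes U_B = -i(I_m\otimes U_B)$. Next I would compute the left-hand side directly from the Kronecker structure: since $I_m\otimes B \pm iI_{mn} = I_m\otimes(B\pm iI_n)$, we obtain $U_{I_m\otimes B} = \bigl(I_m\otimes(B-iI_n)\bigr)\bigl(I_m\otimes(B+iI_n)\bigr)^{-1} = I_m\otimes U_B$. Hence the identity $U_{I_m\otimes B} = U_{I_m}\otimes U_B$ would force $I_m\otimes U_B = -i(I_m\otimes U_B)$, i.e. $(1+i)(I_m\otimes U_B) = 0_{mn}$. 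This is impossible, because $U_B$ is unitary and so $I_m\otimes U_B$ is invertible.

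For the symmetric statement I would argue identically (interchanging the roles of the two tensor factors), or appeal to property (3) from the introduction, which relates $U_{A\otimes I_n}$ and $U_{I_n\otimes A}$ through the permutation matrix $P$: if $U_{A\otimes I_n} = U_A\otimes U_{I_n}$ held, conjugating by $P$ would give $U_{I_n\otimes A} = U_{I_n}\otimes U_A$, contradicting the first part.

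An alternative route, which keeps the argument parallel to Corollary \ref{C1}, is to invoke Theorem \ref{T3} with $A = I_m$: then $A$ has the single eigenvalue $a = 1 \neq 0$, so case (a) would require every eigenvalue of $B$ to equal $b_{1,2} = \bigl(a(1-a)\pm\sqrt{a^2(1-a)^2-4a}\bigr)/(2a) = \pm i$, which is not real and hence not an eigenvalue of a Hermitian matrix; and case (b) would require $1$ to be a value of $a_{1,2} = \bigl(b(1-b)\pm\sqrt{b^2(1-b)^2-4b}\bigr)/(2b)$ for some real $b\neq 0$, which after simplification forces $b^2 = -1$, again impossible. Either route works; the direct computation is the shortest.

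There is essentially no obstacle here: the content of the corollary is precisely the observation that $U_{I_m}\neq I_m$ (indeed $U_{I_m}=-iI_m$), together with the fact that the Cayley transform of $I_m\otimes B$ acts as the identity on the first tensor factor. The only points needing a little care are the verification of the Kronecker identity $U_{I_m\otimes B}=I_m\otimes U_B$ and, if one uses Theorem \ref{T3} instead, the check that neither of its two alternatives can be met when one of the matrices is an identity.
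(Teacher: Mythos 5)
Your proposal is correct, and both of your routes go through. The paper states Corollary \ref{C2} without proof; the intended argument is clearly the analogue of the proof of Corollary \ref{C1}, i.e.\ your second route: apply Theorem \ref{T3} with $A=I_m$, note that case (a) would force the eigenvalues of $B$ to be $\pm i$ (not real, hence impossible for Hermitian $B$), and that case (b) reduces via equation \eqref{E1.3} with $a_j=1$ to $-b_k^2=1$, again impossible over $\mathbb{R}$. Your primary route is genuinely different and more elementary: it bypasses Theorem \ref{T3} entirely by computing $U_{I_m\otimes B}=I_m\otimes U_B$ directly from $I_m\otimes B\pm iI_{mn}=I_m\otimes(B\pm iI_n)$, and comparing with $U_{I_m}\otimes U_B=-i\,(I_m\otimes U_B)$; equality would force $(1+i)(I_m\otimes U_B)=0_{mn}$, contradicting invertibility of the unitary $U_B$. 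This direct computation has the advantage of isolating the exact source of the failure (the phase $-i$ coming from $U_{I_m}=-iI_m$), whereas the Theorem \ref{T3} route keeps the corollary uniform with the rest of the section. Your reduction of the second assertion to the first via the permutation matrix of property (3) is also valid, since that $P$ satisfies $P(X\otimes Y)P^t=Y\otimes X$ for all $X,Y$ of the appropriate sizes.
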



\begin{remark}\em
\label{R1}
The natural question here is whether the analogue results hold true if we replace the Kronecker product with some other product, for example, with the star product 
or with the direct sum. In particular, if $A\in H_m$ and $B\in H_n$, then one can easily show that $U_{A\oplus B} = U_{A}\oplus U_B$. Now, recall that the star product 
of $A=(a_{jk})\in M_2$ and $B\in M_n$ is defined by
$$A \star B := 
\begin{pmatrix}
a_{11} & 0_{1\times n}  & a_{12} \\ 
0_{n\times 1} & B & 0_{n\times 1} \\
a_{21} & 0_{1\times n} & a_{22}
\end{pmatrix},$$
where $0_{n\times 1}$ is a column of $n$ zeros, and $0_{1\times n}=0_{n\times 1}^t$. It is also known that there exists a permutation matrix $P\in M_{n+2}$ such that 
$P(A\star B)P^t=A\oplus B$. Therefore, if $A\in H_2$ and $B\in H_n$, then $U_{A\star B} = U_{A}\star U_B$ as well.
\end{remark}

\begin{remark}\em
\label{R2}
At the end, let us point out that we have considered just the bipartite case, i.e., $M_m\otimes M_n$ with integers $m,n\ge 2$. But we can naturally extend our results
to the  multipartite systems $M_{n_1}\otimes\cdots \otimes M_{n_m}$ with $n_1,\ldots, n_m\ge 2$ and $m>2$ when
$$
 U_{M_{n_j}\otimes\cdots\otimes M_{n_m}}
  =U_{M_{n_j}}\otimes U_{M_{n_{j+1}}\otimes\cdots\otimes M_{n_m}}\qquad\forall j\in\{1,\ldots,m-1\}
$$
or
$$
 U_{M_{n_1}\otimes\cdots\otimes M_{n_j}}
  =U_{M_{n_1}}\otimes U_{M_{n_2}\otimes\cdots\otimes M_{n_j}}\qquad\forall j\in\{2,\ldots,m\}.
$$
Namely, if, for example, $m=3$ and $A_1\in H_{n_1}$, $A_2\in H_{n_2}$,
$A_3\in H_{n_3}$, then we first observe when $U_{A_1\otimes (A_2\otimes A_3)} = U_{A_1}\otimes U_{A_2\otimes A_3}$. According to Theorem \ref{T3}, this is true if and only if
one of the following conditions is fulfilled.
\begin{itemize}
	\item[(a)] $A_1$ has one eigenvalue (which is nonsingular) and $A_2\otimes A_3$ has one or two eigenvalues (which are nonsingular) given by the exact formula (see case (a) in Theorem \ref{T3}).
	\item[(b)] $A_2\otimes A_3$ has one eigenvalue (which is nonsingular) and $A_1$ has one or two eigenvalues (which are nonsingular) given by the exact formula (see case (b) in Theorem \ref{T3}).
\end{itemize}
Again, using Theorem \ref{T3}, we find out that $U_{A_1\otimes A_2\otimes A_3} = U_{A_1}\otimes U_{A_2}\otimes U_{A_3}$ if
for distinct $j,k,l\in \{1,2,3\}$, $A_j$ and $A_k$ have one eigenvalue (nonsingular) and $A_l$ has one or two eigenvalues (nonsingular eigenvalues are given by the exact 
formulas). Similarly, $U_{A_1\otimes\cdots\otimes A_m} = U_{A_1}\otimes\cdots\otimes U_{A_m}$ for Hermitian matrices $A_1\in H_{n_1},\ldots, A_m\in H_{n_m}$, $m>3$, if
all the matrices $A_1,\ldots, A_m$, with one possible exception (this exception may have two eigenvalues, both nonsingular), have 
one eigenvalue (nonsingular) given by the exact formula. We omit the details since the proofs are rather technical.
\end{remark}

\begin{remark}\em
\label{R3}
The results in the bipartite case do not extend to the multipartite case in
a straightforward way. For example, we have
\begin{align*}
 U_{I_m\otimes I_m}&\neq U_{I_m}\otimes U_{I_m},\\
 U_{I_m\otimes I_m\otimes I_m}&\neq U_{I_m}\otimes U_{I_m}\otimes U_{I_m},\\
 U_{I_m\otimes I_m\otimes I_m\otimes I_m}&\neq U_{I_m}\otimes U_{I_m}\otimes U_{I_m}\otimes U_{I_m}
\end{align*}
but
\begin{equation*}
 U_{I_m\otimes I_m\otimes I_m\otimes I_m\otimes I_m}=U_{I_m}\otimes U_{I_m}\otimes U_{I_m}\otimes U_{I_m}\otimes U_{I_m}.
\end{equation*}
\end{remark}


\-

\noindent{\bf Acknowledgments.}
Our thanks to an anonymous referee for raising the question whether there
is an analogue for the Kronecker sum (under exponentiation) for the Cayley transform.
Y. Hardy and W.-H. Steeb are supported by the National Research Foundation (NRF),
South Africa. This work is based upon research supported by the National
Research Foundation. Any opinion, findings and conclusions or recommendations
expressed in this material are those of the author(s) and therefore the
NRF do not accept any liability in regard thereto.



\end{document}